\documentclass[
article,
10pt,oneside]{amsart}
\usepackage{amsmath,amsthm,amsfonts}
\usepackage{amssymb,latexsym}
\usepackage[cmtip,all]{xy}
\usepackage{graphicx}
\usepackage{caption}
\usepackage{hyperref}
\usepackage{cleveref}

\marginparwidth 0pt
\marginparsep 0pt
\textwidth   7.0in
\textheight  9.0 in
\oddsidemargin -9pt
\evensidemargin 3pt
\topmargin -.6in
\headsep .5in

\newtheorem{theorem}{THEOREM}[section]
\newtheorem{definition}[theorem]{Definition}
\newtheorem{remark}[theorem]{Remark}

\newtheorem{lemma}[theorem]{Lemma}

\title{Admissible Complexes for the Projective X-Ray Transform over a Finite Field}
\author{David V. Feldman \\ University of New Hampshire
\\ \\ \and \\ \\
Eric L. Grinberg\\ University of Massachusetts, Boston}

\address{Department of Mathematics \\ 
               University of New Hampshire \\  
               Kingsbury Hall \\
               Durham NH 03824\\ USA}
\email{\href{mailto:david.feldman@umb.edu}{david.feldman@unh.edu} }                

\address{Department of Mathematics \\
             University of Massachusetts \\
             100 Morrissey Boulevard \\
             Boston, MA 02125 \\
             USA}
\email{\href{mailto:eric.grinberg@umb.edu}{eric.grinberg@umb.edu} }         

\date{}
\def\Cal{\mathcal}
\def\fq{\mathbb F_q}
\def\fp3{\mathbb F_q P^3}
\def\are3{\mathbb R^3}
\def\Points{{\Cal P}}
\def\Lines{{\Cal L}}
\def\Quadrics {{\Cal Q}}

\begin{document}

\subjclass[2010]{Primary 44A12; Secondary 51E23 , 51E20 , 53C65}

\keywords{Radon transform, X-ray transform, integral geometry, admissibility, line complexes, projective spaces, finite fields, doubly ruled quadric surfaces}

\begin{abstract}
We consider the X-ray transform in a projective space over a finite field. It is well known (after E.~Bolker) that  this transform is injective. We formulate an analog of I.M.~Gelfand's {\it admissibility problem} for the Radon transform, which asks for a classification of all minimal sets of lines for which the restricted Radon transform is injective. The solution involves doubly ruled quadric surfaces.
\end{abstract}

\maketitle

\section{Introduction}

We consider a problem in integral geometry for projective spaces over finite fields. This can be manifested using S.S.~Chern's double fibration diagram  introduced by S.~Helgason in the context of homogeneous spaces and by  V.~Guillemin and S.~Sternberg in the context of microlocal analysis:

\[
\begin{xy}
\begin{large}
\xymatrix{
     & Z \ar[dl]_{\pi}\ar[dr]^{\rho} & \\
      X  & & Y}
\end{large}
\end{xy}
\qquad\qquad\qquad
\]

\vskip 0.6truein

\noindent
Here $X$ and $Y$ are finite sets and $Z$ is an {\em incidence relation} which is a subset of the Cartesian product $X \times Y$.  Our starting point is the point-line diagram in projective space. Let $\fq$ be a finite field with $q$ elements. Let $X = \fp3$ be the projective $3$-space over $\fq$, let $Y$ be the set of projective lines in $\fp3$ and let $Z$ be the collection of pairs $(x, \ell )$, where $x$ is a point in $\fp3$, $\ell$ is a projective line in $\fp3$, and $x$ lies in $\ell$. The maps $\pi$ and $\rho$ are inherited from the Cartesian projection maps. These induce mappings on function spaces:
$$
\pi^* : C(X) \longrightarrow C(Z) \quad ; \quad \rho_* : C(Z) \longrightarrow C(Y),
$$
where $\pi^*$ is the pullback of $\pi$, while $\rho_*$ is the pushforward, or summation over the fiber, map.

The Radon transform attached to this diagram (and manifestation of push-forward) is $R \equiv \rho_* \pi^*$. This an \lq\lq integral transform" taking point functions $f(x)$ to line  functions $Rf( \ell )$:
$$
Rf( \ell ) \equiv \sum_{ x \in \ell } f(x).
$$
The number of lines passing through a point $x$ is independent of $x$ (and greater than $1$) and the number of lines passing through two distinct points $x_1,x_2$ is $1$, hence independent of the pair $x_1 \ne x_2$. Thus this diagram satifies axioms introduced by E.~Bolker \cite{Bol} in the 1970s, which became known as the {\em Bolker condition}, and hence the transform is injective, with a simple inversion formula. 

\begin{theorem}[Bolker]
Assume that the double fibration diagram satisfies the following two conditions:
$$
\begin{aligned}
\bullet \qquad & \# \left( \pi^{-1}(x) \right) \qquad\qquad\quad  = \alpha ,   \forall x & \quad  
\qquad \textrm {(uniform count of lines through each point)}
 \\
\bullet \qquad & \#  \left(\pi^{-1}(x_1) \cap \pi^{-1}(x_2) \right) = \beta ,   
 \forall   x_1 \ne x_2  &
\textrm {(uniform count of lines through each point pair),}
\end{aligned}
$$
for constants $\alpha , \beta$, with  $0 \ne \alpha \ne  \beta$.
Then the Radon transform associated with the diagram is invertible, 
with an explicit inversion formula.
\end{theorem}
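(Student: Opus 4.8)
The plan is to form the normal operator $R^{*}R$, where $R^{*}\colon C(Y)\to C(X)$ is the dual (backprojection) transform $R^{*}g(x)=\sum_{\ell\ni x}g(\ell)$, and to show that under the Bolker conditions this operator is a simple, invertible perturbation of the identity. First I would compute $R^{*}Rf(x)=\sum_{\ell\ni x}\sum_{y\in\ell}f(y)$ by separating the inner sum into the contribution from $y=x$ and from $y\neq x$. For the diagonal term, $x$ lies on exactly $\alpha$ lines by the first hypothesis, contributing $\alpha f(x)$. For each off-diagonal point $y\neq x$, the value $f(y)$ is counted once for every line through both $x$ and $y$, and there are exactly $\beta$ such lines by the second hypothesis; summing over $y\neq x$ yields $\beta\bigl(\sum_{y}f(y)-f(x)\bigr)$.

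Combining these gives the key identity
\[
R^{*}Rf=(\alpha-\beta)\,f+\beta\Bigl(\sum_{y}f(y)\Bigr)\mathbf 1,
\]
where $\mathbf 1$ is the constant function $1$ on $X$. Thus $R^{*}R=(\alpha-\beta)I+\beta J$ with $J$ the rank-one operator $Jf=(\sum_{y}f(y))\mathbf 1$. The remaining task is to invert this. The scalar $\sum_{y}f(y)$ (the total mass of $f$) is itself recoverable from the data: summing $Rf$ over all lines gives $\sum_{\ell}Rf(\ell)=\alpha\sum_{y}f(y)$, since each point lies on $\alpha$ lines, so $\sum_{y}f(y)=\tfrac1\alpha\sum_{\ell}Rf(\ell)$ (this uses $\alpha\neq0$). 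Substituting back and solving the key identity pointwise yields the explicit inversion
\[
f(x)=\frac{1}{\alpha-\beta}\Bigl(R^{*}Rf(x)-\tfrac{\beta}{\alpha}\sum_{\ell}Rf(\ell)\Bigr),
\]
which is legitimate precisely because $\alpha\neq\beta$. Equivalently, one may invert $(\alpha-\beta)I+\beta J$ directly by the Sherman--Morrison formula, noting that $J$ has eigenvalue $N=\#X$ on $\mathbf 1$ and eigenvalue $0$ on zero-sum functions, so the eigenvalues of $R^{*}R$ are $\alpha-\beta$ and $\alpha+(N-1)\beta$; both are nonzero under the hypotheses (the latter automatically, since $\alpha>0$ and $\beta\ge0$ are nonnegative incidence counts).

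I do not expect a serious obstacle: the content is the bookkeeping in the $R^{*}R$ computation, where the essential point is to isolate the diagonal contribution so that the two distinct constants $\alpha$ and $\beta$ get applied to the correct parts of the double sum. The condition $\alpha\neq\beta$ is exactly what prevents the rank-one perturbation $\beta J$ from degenerating the operator, while $\alpha\neq0$ is what lets us read off the total mass; together these are precisely the hypotheses needed for $R^{*}R$ to be invertible, hence for $R$ to be injective with the displayed inversion formula.
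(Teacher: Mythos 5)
Your proposal is correct and follows exactly the route the paper indicates for this theorem: compute the normal operator $R^{*}R$ in the natural (delta-function) basis, recognize it as a rank-one perturbation of a diagonal matrix, namely $(\alpha-\beta)I+\beta J$, and invert it explicitly using $\alpha\neq\beta$ and the recovery of total mass via $\alpha\neq 0$. The paper gives only a one-sentence sketch of this argument, and your write-up supplies the details of that same argument correctly.
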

One proof of this result uses a natural basis on the space of functions in $X$ to present the associated Radon transform, composed with its dual, as a rank one perturbation of a diagonal matrix. The Bolker Condition holds for many classical geometries, e.g., 
whenever there is a doubly transitive group of symmetries acting. But there are also
many situations in which the condition does not hold. It seems plausible
that a somewhat more general condition may be workable, 
one where the cardinality of the set of lines through
a pair of points can vary within a multi-element collection of values, leading to
a banded matrix for $R^rR$. 
In the admissibility analysis below we will encounter a corresponding matrix which has more bands, but which is still sufficiently simple for complete analysis.

In the spirit of I.M.~Gelfand's work on the Radon transform in the continuous category, we note that the inversion problem is overdetermined (there are more lines than points), and ask the admissibility question: what are minimal collections of lines (subsets of $Y$) for which the restricted Radon transform is injective? As in the continuous work of Gelfand and collaborators we seek a description of these minimal data sets which is systematic and geometric in nature. Some results on admissibility in the finite category may be found, e.g., in \cite{BGK, Gr1, Gr2}. 

Of course, there are many variations on this double fibration diagram, where \lq\lq points" need not be points, and \lq\lq lines" need not be lines. Also, the manifestation of \lq\lq integration" over the fiber can be varied. In a number of classical cases the Bolker condition is satisfied and the associated Radon transform is easily inverted. On the other hand, in  many other cases these conditions are not satisfied and the invertibility properties are more involved. Our admissibility investigations for the line, or X-Ray  transform in $\fp3$ will lead us to another double fibration, where $X$ is the set of lines in $\fp3$ and $Y$ is a collection of doubly ruled quadric surfaces in $\fp3$, which may be viewed as special families of lines. Moreover, our push-forward operation will involve not counting measure, but rather a signed measure. The associated transform will enable us to characterize admissible sets of lines for the first transform.  The analysis, initially considered for the smallest field $\mathbb Z_2$, turns out to hold uniformly for all finite fields, regardless of structure.

\bigskip

\section{The X-Ray transform in Projective Space over the Field $\mathbb F_q$}

Recall for the moment the classical problem considered by J.~Radon \cite{Radon1917}, that
of determining a (reasonable) function $f(x)$ in $\are3$ from its integrals over lines. Now the manifold of lines in $\are3$ is of dimension four, one more than the dimension of the Euclidean space $\are3$. Hence the Radon inversion problem is overdetermined by one dimension, and one presumes that some three dimensional family of lines should suffice for Radon inversion. And indeed it turns out that the family of lines meeting a fixed curve $\Gamma$ (for a large class of possible curves $\Gamma$) or the family of lines tangent to a surface $\Sigma$ (for a rather restrited class of possible surfaces $\Sigma$) will work \cite{Gel1, Gel2, Kir}.
Similarly, the projective ($3$)-space over the finite field $\mathbb F_q$ has cardinality $O(q^3)$, while the collection of projective lins in this projective space has cardinality $O(q^4)$. The projective line transform in $\fp3$ satisfies the Bolker Condition, and is hence invertible. Linear algebraic considerations suggest that a collection of lines of
cardinality $O(q^3)$ should suffice for Radon inversion. This is indeed the case and will be explored more precisely below.

The $3$-dimensional projective space $\fp3$ over a finite field ${\Bbb F}_q$ with $q$ elements has
$\Points := \frac{q^4-1}{q-1} = q^3+q^2+q+1$ points and $\Lines := \frac{\Points(\Points-1)}{(q+1)q}=(q^2+1)(q^2+q+1)$ lines.
(Each line contains $q+1$ points.)
See \cite{Bol} 
\begin{definition}
 A {\bf line complex} is a collection of lines $L \subset \Lines$, which contains as many elements as there are points in $\fp3$: $ |L| = | \Points |$. 
\end{definition}

A line complex  $L$ determines a restricted integration  map, which is a linear transformation from functions of points in $\fp3$ to functions of lines in $L$: 
$$ 
T_L:C( \Points ) \rightarrow C(  L ) 
$$
where 
$$
T_L(f)(\ell)=\sum_{\rho\in \ell} f(\rho).
$$

This is an analog of the {\em Radon} or {\em X-Ray} transform.  Injectivity for $T_L$ affords the possibility of determining every function $f$
from its \lq\lq integrals" over lines in $L$.  

\begin{definition}
We say that a line complex $L$ is {\em admissible } if the restricted $X-ray$ transform  $T_L$ is injective. 
\end{definition}

Bolker's Theorem  implies that the full X-Ray transform is injective, and elementary rank considerations indicate that, as a consequence, admissible complexes must exist. I.M.~Gelfand pioneered the study of admissibility in integral geometry and, together with his collaborators, developed the subject deeply in a series of papers.

The aim at hand is to characterize the admissible line complexes among all complexes. Gelfand's original approach in the continuous category was to use the {\em range characterization} of the Radon transform, i.e., which functions of {\em lines } are integrals of functions of {\em points}. A necessary set of conditions, which Gelfand calls the {\em Cavalieri Conditions} is manifested in our context by certain doubly ruled surfaces $\Sigma$. Each such surface may be presented as a disjoint union of lines, Bolker's  {\em local spread} of lines, in more than one way. If a line function $f( \ell )$ in the range of the Radon transform is averaged over a spread of lines, the  average value is simply the normalized total mass of the Radon pre-image of $f ( \ell )$ over the surface $\Sigma$, and is hence independent of the spread chosen. Thus we obtain a linear condition for each surface $\Sigma$ and each pair of spreads on $\Sigma$. We note that Bolker \cite{Bol} constructed a family of global spreads on $\fp3$ using complex structures on the plane $\fq^2$, and showed, using a clever representation theory argument, that the corresponding Cavalieri conditions characterize the range of the Radon transform. Moreover, Bolker anticipated the role of ruled quadric surfaces.

\centerline{
\includegraphics[width=60mm]
{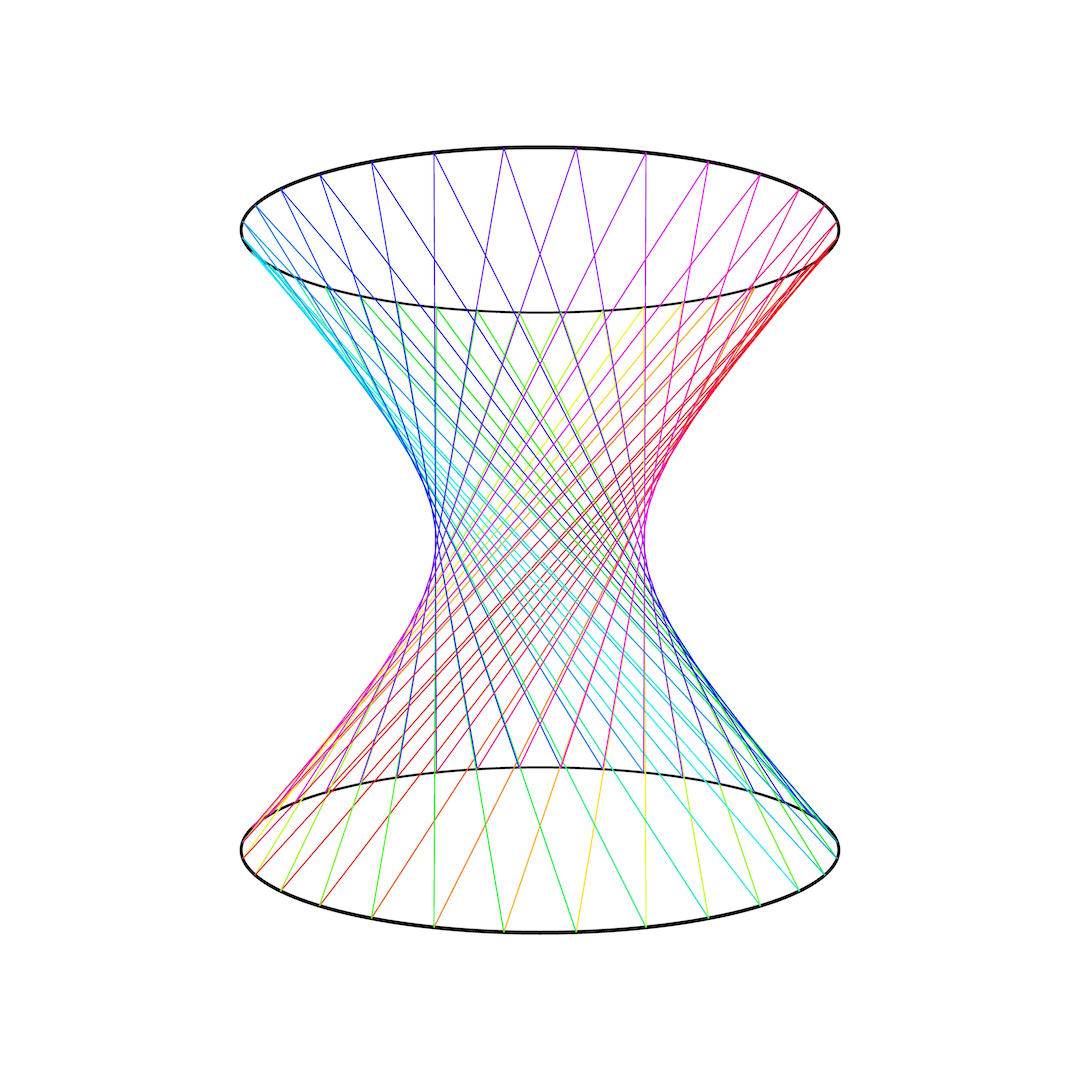}
}

\begin{center}
\includegraphics[width=0.60mm]{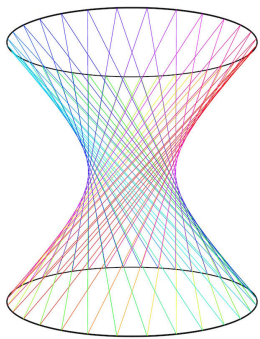}
\captionof{figure}{A ruled surface}
\end{center}

 In the continuous category the resulting conditions do not suffice, as not enough analytic information is obtained about the possible Radon pre-image. But in the present context we will show that these conditions suffice. In Bolker's jargon, {\em there are enough spreads}.

With the aim of  characterizing admissible line complexes for the X-Ray transform in $\fp3$ we introduce certain formal integral linear combinations of lines that we shall call {\bf DRQ}s, for {\bf doubly-ruled quadrics}. 
A DQR has the form 
$$
\sum_{i=0}^q m_i - \sum_{i=0}^q n_j ,
$$
where each line $m_i$ meets each line $n_j$ but no other intersections occur.  In particular, $\bigcup m_i = \bigcup n_j.$  The
lines lying on the surface $wx-yz=0$ support a DRQ, and up to collineation all DRQs have this form; this explains the
terminology.

An admissible line complex $L$ never supports any non-trivial linear combination of DRQs. Indeed, injectivity of $T_L$
entails surjectivity of $T_L$.  If $L$ supported $\sum_k \alpha_k Q_k$ with $Q_k=\sum_{i=0}^q m_{i,k} -\sum_{j=0}^q
n_{j,k}$, then for all $f$ we would have $\sum_i T_L(f)(m_{i,k}) = \sum_j T_L(f)(n_{j,k})$, a non-trivial linear
condition cutting down the dimension of the image of $T_L$.

Actually this necessary DRQ-avoidance condition also suffices:

\bigskip
\begin{theorem} Let $L$ be a line complex in $\fp3$. Then $L$ is admissible if and only if $L$ supports no Doubly Ruled Surface (DRQ) $\Sigma$.
\end{theorem}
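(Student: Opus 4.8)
The plan is to recast admissibility as a statement about the linear independence of incidence vectors. For a line $\ell$ let $\chi_\ell \in C(\Points)$ be its characteristic function, $\chi_\ell(p)=1$ iff $p\in\ell$, so that $T_L(f)(\ell)=\langle \chi_\ell, f\rangle$ and $T_L f = 0$ exactly when $f$ is orthogonal to $\operatorname{span}\{\chi_\ell : \ell\in L\}$. Since $|L| = |\Points| = (q+1)(q^2+1)$, the operator $T_L$ is represented by a \emph{square} matrix, so it is injective if and only if it is surjective, if and only if the $|\Points|$ vectors $\{\chi_\ell\}_{\ell\in L}$ are linearly independent (equivalently, form a basis of $C(\Points)$). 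Thus the theorem reduces to the purely linear-algebraic assertion: the family $\{\chi_\ell\}_{\ell\in L}$ is linearly dependent if and only if $L$ contains the $2(q+1)$ lines of some DRQ.

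The necessity direction (admissible $\Rightarrow$ no DRQ) is the easy half, and I would dispatch it by exhibiting the dependence explicitly. If $\Sigma=\sum_i m_i - \sum_j n_j$ is a DRQ with all of its lines in $L$, then because the two rulings partition the \emph{same} point set, each point of the quadric lying on exactly one $m_i$ and one $n_j$, we get $\sum_i \chi_{m_i} = \sum_j \chi_{n_j}$. These $2(q+1)$ lines are distinct and the coefficients are $\pm1\neq 0$, so this is a nontrivial relation among $\{\chi_\ell\}_{\ell\in L}$, and the family is dependent. (Dually, as the text notes, surjectivity fails because the Cavalieri identity $\sum_i g(m_i)=\sum_j g(n_j)$ is forced on $\operatorname{range} T_L$.)

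For the converse I would prove the sharper structural fact that the \emph{circuits} — the support-minimal linear dependences — of the full family $\{\chi_\ell\}_{\ell\in\Lines}$ are precisely the DRQs. Granting this, if $L$ is not admissible then $\{\chi_\ell\}_{\ell\in L}$ is dependent, hence contains a circuit, which is a DRQ supported in $L$; this yields exactly the single-DRQ phrasing of the statement. Two ingredients enter. First, the DRQ relations span the whole space $\ker\Phi$ of dependences, where $\Phi:\mathbb{R}^{\Lines}\to\mathbb{R}^{\Points}$ sends $\ell\mapsto\chi_\ell$; this kernel has dimension $|\Lines|-|\Points| = (q^2+1)(q^2+q+1)-(q+1)(q^2+1) = q^2(q^2+1)$, and that the doubly ruled quadrics supply enough independent Cavalieri relations to fill it is precisely the finite-field analog of Gelfand's range characterization, i.e.\ Bolker's \emph{enough spreads} theorem applied here to local spreads on DRQ surfaces. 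Second, one must upgrade spanning to minimality: a general element of $\ker\Phi$ has support contained in $L$ need not by itself produce a single DRQ inside $L$, so the circuit classification is the substantive content.

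I expect the circuit classification to be the main obstacle. The combinatorial skeleton is clear: a minimal relation $\sum_{\ell\in C} c_\ell \chi_\ell = 0$ with all $c_\ell\neq 0$ forces, at every point $p$ covered by $C$, the balance $\sum_{\ell\in C,\, p\in\ell} c_\ell = 0$, so no covered point lies on a single line of $C$ and every covered point is multiply covered. Ruling out the degenerate cases is quick (two lines meeting at a point would force a coefficient to vanish at the unshared points, etc.), but promoting the per-point balance condition, together with minimality, to the conclusion that $C$ lies on one surface carrying two oppositely signed rulings is the geometric heart: one wants three pairwise-skew lines of $C$ to determine a regulus and the balance condition to force the complementary regulus, reconstructing the quadric $wx-yz=0$ up to collineation. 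I anticipate that the cleanest way to control both the spanning statement and this minimality statement simultaneously is through the second double fibration mentioned in the introduction — lines fibered over doubly ruled quadrics with a signed push-forward — whose composite $S^\ast S$ is the promised banded matrix; computing its spectrum, uniformly in $q$, should give the exact rank of the DRQ relations and the fact that their minimal supports are the DRQs themselves.
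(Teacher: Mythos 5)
Your framing and your first direction are fine: with $|L|=|\Points|$ the matrix of $T_L$ is square, so admissibility is equivalent to linear independence of the incidence vectors $\{\chi_\ell\}_{\ell\in L}$, and a DRQ all of whose lines lie in $L$ gives the explicit dependence $\sum_i \chi_{m_i}=\sum_j \chi_{n_j}$; this matches the paper's necessity argument. The genuine gap is in the converse, and it is twofold. First, you never prove the statement on which everything hinges, namely that the DRQ relations span the full space of dependences $\ker\Phi$ (equivalently, the kernel of the dual transform $T_\Lines^t$, of dimension $|\Lines|-|\Points|=q^4+q^2$). You dispose of it as ``Bolker's enough-spreads theorem applied to local spreads on DRQ surfaces,'' but no such off-the-shelf result exists: what Bolker proved (as recounted in the paper) is a range characterization via \emph{global} spreads built from complex structures on $\fq^2$, by a representation-theoretic argument, and it does not say that the doubly-ruled-quadric Cavalieri relations fill the kernel. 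That spanning statement is precisely the technical heart of the present paper, and it is established there by a concrete computation: counting triads and DRQs, forming the Gram (``Cavalieri'') matrix $B=AA^t$, whose entries $b_=$, $b_\times$, $b_{||}$ depend only on the mutual position of two lines, and verifying by explicit counting that $B$ equals $v=q^2(q-1)(q+1)(q^2+q+1)$ times an orthogonal projection, so that its rank is its trace divided by $v$, namely $q^4+q^2$. Without this computation (or some substitute for it), your converse does not get off the ground.

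Second, the circuit classification you lean on is also unproven, and the route you anticipate cannot deliver it. You are right to notice that spanning alone does not produce a single DRQ wholly inside $L$: a combination of DRQ relations supported in $L$ may involve individual DRQs having lines outside $L$ that cancel in the sum. But your hope that computing the spectrum of your signed composite $S^{\ast}S$ will also show that the minimal supports are exactly the DRQs is misplaced: the spectrum of that Gram matrix determines only the dimension of the span of the DRQ relations --- i.e., it re-proves spanning --- and carries no information about which supports of kernel elements are minimal. Note that the paper itself never proves a circuit classification; the condition it actually shows equivalent to admissibility is that $L$ supports no \emph{nontrivial linear combination} of DRQs (this is the ``necessary DRQ-avoidance condition'' referenced immediately before the theorem), and with that reading the proof closes by duality: a kernel element of $T_L^t$, extended by zero, is a dependence supported in $L$, hence by spanning a combination of DRQs supported in $L$, hence zero; so $T_L^t$ injects, hence bijects, hence $T_L$ injects. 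So you should either adopt this weaker (and intended) reading of ``supports'' and supply the missing rank computation, or else carry out a genuinely new geometric argument classifying the circuits --- something neither your sketch nor the paper provides.
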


The proof will follow from a series of lemmas.

\bigskip\noindent
Regarding $\Lines$ as the totality of lines in $\fp3$, we have the dual Radon transform, which is a linear transformation:
$$ 
T_\Lines^t:C(  \Lines ) \rightarrow  C( \Points ) 
$$
defined by setting, for a line function $g( \ell )$,
$$
T_\Lines^t g( \ell ) (p) =\sum_{ \{ \ell \, \vert \, p \in \ell \} } g( \ell ).
$$ 
We aim to show that DRQs span the kernel of $T_\Lines^t$.  
Then for any line complex that supports no linear combination
of DRQs, $T_L^t:C(  L ) \rightarrow  C( \Points )$ will have trivial kernel, thus inject and indeed biject,
thereby making $T_L$  itself an injection, as desired.

Since one already knows the injectivity of $ T_\Lines:C( \Points ) \rightarrow C(  \Lines ) $ from Bolker's theorem in the theory of the finite or combinatorial Radon transform, one knows that $T_\Lines^t$ surjects, and thus has a kernel of dimension $|\Lines|-|\Points|=q^4+q^2$.  Thus we aim to show that the span of the DRQs has the same dimension.

The lines of a DRQ determine it only up to sign. So for each set of $2(q+1)$ lines that do support a DRQ, choose, once
and for all, just one DRQ, $\sum_{i=0}^q m_i - \sum_{j=0}^q n_j$, and write $\Quadrics$ for the set of these.

Linear combinations of linear combinations yield linear combinations, so we have a linear transformation
from functions of DRQs to functions of lines:
$$ 
T_\Quadrics^t:C(  \Quadrics ) \rightarrow C(  \Lines )\ ,
$$
which is the adjoint of the map defined by 
$$
T_{\Quadrics}(g)(Q)= \sum_ig(m_i) 
- \sum_j g(n_j)
$$ 
for 
$$
Q=\sum_{i=0}^q m_i - \sum_{j=0}^q n_j\ .
$$

We may regard the dimension of the image of $T_\Quadrics^t$
 as the rank of a certain matrix $A$ with rows labelled by
lines and  columns labelled by DRQs, 
The various entries of this matrix carry the coefficients of the chosen DRQs.  
The square
matrix $B \equiv AA^t$, which we will call the {\bf Cavalieri matrix}, has rows and columns labelled by lines, 
has the same rank as $A$ 
and also has the advantage that it doesn't depend upon the enumeration of DRQs 
or the sign choices above.  
To describe the entries of $B$ first we need to count all the DRQs.

\begin{lemma} The projective space $\fp3$ contains {\bf triads}, that is,
triples of pairwise disjoint lines. Any disjoint pair of lines may be extended to a
triad in $ q(q+1)(q-1)^2$ ways.
\end{lemma}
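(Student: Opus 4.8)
The plan is to prove both assertions by direct point-counting in $\fp3$, via inclusion--exclusion. Existence of triads will drop out once the count of admissible third lines is shown to be positive, so the real content is the enumeration. I would first record the two elementary incidence facts that drive everything: every point of $\fp3$ lies on exactly $q^2+q+1$ lines (since $(\Points-1)/q = q^2+q+1$), and two distinct points determine a unique line.

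From the first fact I would count the lines meeting a single fixed line $m$. Apart from $m$ itself, every line incident to $m$ meets it in exactly one of its $q+1$ points and is one of the $q^2+q$ remaining lines through that point, so there are $(q+1)\cdot q(q+1)+1 = q(q+1)^2+1$ lines incident to $m$ (counting $m$). Hence $\Lines - \big(q(q+1)^2+1\big) = q^4$ lines are disjoint from $m$, and since $q^4>0$ disjoint (skew) pairs exist. Next, fixing a skew pair $m_1,m_2$, I would count the third lines $m_3$ disjoint from both by inclusion--exclusion on the events ``meets $m_1$'' and ``meets $m_2$.'' The only genuinely new ingredient is the number of common transversals, i.e.\ lines meeting both $m_1$ and $m_2$: because $m_1\cap m_2=\emptyset$, any transversal meets each $m_i$ in exactly one point (two shared points would force equality with that line), so transversals correspond bijectively to pairs $(p_1,p_2)\in m_1\times m_2$ via $(p_1,p_2)\mapsto \overline{p_1 p_2}$, with distinct pairs giving distinct lines by the same two-point argument. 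This yields exactly $(q+1)^2$ transversals.

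Combining, the number of lines disjoint from both $m_1$ and $m_2$ equals $\Lines - 2\big(q(q+1)^2+1\big) + (q+1)^2$, which I would expand and factor as $q^4 - q^3 - q^2 + q = q(q+1)(q-1)^2$, matching the claim. The step I expect to need the most care is the inclusion--exclusion bookkeeping: keeping track of whether each $m_i$ is itself counted among the lines ``meeting'' $m_i$, and confirming that no transversal is accidentally $m_1$ or $m_2$ (it cannot be, since those are skew and so neither meets the other). Once the algebra is checked, positivity of $q(q+1)(q-1)^2$ for $q\ge 2$ re-confirms that every skew pair extends to a triad, completing both parts of the lemma.
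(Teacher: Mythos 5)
Your proof is correct and follows essentially the same route as the paper's: both count the $q^4$ lines skew to a fixed line, count the $(q+1)^2$ common transversals of a skew pair (which the paper asserts without your explicit point-pair bijection), and finish by the same inclusion--exclusion, differing only in whether $m_1,m_2$ are folded into the incidence sets or subtracted separately at the end. No gaps; the argument matches the paper's.
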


\begin{proof}
We observed above that the number of lines in our projective space is $|\Lines|=(q^2+1)(q^2+q+1)$. 
Fix a point $p$ in $\fp3$, i.e., a line in $\mathbb F_q^4$. The projective lines in $\fp3$ which meet $p$ correspond to the $1$-dimensional vector subspaces of the quotient space ${\mathbb F}^4 / p$, and these form a projective plane. Hence there are $(q^3-1)/(q-1) \, = \, q^2 +q +1$ projective lines that meet $\ell$ at a point $p \in \fp3$. 

Now fix a line $\ell \subset  \fp3$. Then there are $q^2+q+1-1$ lines meeting $\ell$ precisely at the point $p \in \ell$, and there are $(q+1)(q^2+q)$ lines meeting $\ell$ at a point. Therefore, there are 
$$
| \Lines | - (q+1)(q^2+q) =(q^2+1)(q^2+q+1)-(q+1)(q^2+q)-1=q^4
$$ 
lines disjoint from $\ell$.  Given two disjoint lines, $(q+1)(q^2+q)$ lines meet
at least the first one at a point, the same number meet at least the second, but $(q+1)^2$ meet both.  Thus
$(q^2+1)(q^2+q+1)-2(q+1)(q^2+q)+(q+1)^2-2=q(q+1)(q-1)^2$ {\em miss} both
\end{proof}

\bigskip
\begin{lemma}  Any triad of mutually disjoint lines may be extended uniquely to a DRQ, and the number of DRQs equals 
$$
\frac{1}{2}(q^2+1)(q^2+q+1)q^4(q-1)\ .
$$
\end{lemma}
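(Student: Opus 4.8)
The plan is to establish the two assertions separately: first the geometric claim that a triad extends uniquely to a DRQ, and then to derive the count by double-counting ordered triads against the two extension lemmas already in hand. Throughout, an \emph{ordered triad} means an ordered triple of pairwise disjoint lines.

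For the extension, fix a triad $\ell_1,\ell_2,\ell_3$. Observe that the \emph{second ruling} of any DRQ carrying all three in its first ruling is forced to be the set of transversals, that is, lines meeting each of $\ell_1,\ell_2,\ell_3$. So I would first show there are exactly $q+1$ such transversals. Through a point $p\in\ell_1$, any transversal must lie in the plane $\langle p,\ell_2\rangle$ (to meet $\ell_2$) and in the plane $\langle p,\ell_3\rangle$ (to meet $\ell_3$); these planes are distinct because $\ell_2,\ell_3$ are skew, so their intersection is a single line through $p$, and one checks it genuinely meets $\ell_2$ and $\ell_3$. Hence each of the $q+1$ points of $\ell_1$ carries exactly one transversal, and a short argument (two transversals meeting would force $\ell_1,\ell_2$ coplanar) shows these transversals $n_0,\dots,n_q$ are pairwise disjoint.

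Next I would pin down the full doubly-ruled structure, for which the cleanest route is coordinates. One first checks that $\mathrm{PGL}_4(\fq)$ acts transitively on ordered triples of pairwise disjoint lines: a line skew to a fixed skew pair is the graph of a linear isomorphism between the two $2$-dimensional subspaces cutting out that pair, and the stabilizer of the pair acts as $\mathrm{PGL}_2(\fq)\times\mathrm{PGL}_2(\fq)$ with scaling, carrying any such isomorphism to the identity. Thus I may assume $\ell_1,\ell_2,\ell_3$ sit in standard position on the Segre quadric $x_0x_3=x_1x_2$, where the two rulings $m_{[a:c]}$ and $n_{[b:d]}$ are explicit, each has $q+1$ members, each $m$ meets each $n$ in exactly one point, and members of a common ruling are disjoint --- precisely a DRQ, manifestly smooth. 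Uniqueness then follows either because three skew lines impose nine conditions (which in this normal form are visibly independent) on the $10$-dimensional space of quadratic forms, leaving a unique quadric, or simply because the forced second ruling above already determines the first ruling as its own set of transversals.

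For the count I would double-count ordered triads. Combining the two preceding lemmas, the number of ordered triads is $|\Lines|\cdot q^4\cdot q(q+1)(q-1)^2$: choose $\ell_1$ in $|\Lines|$ ways, then $\ell_2$ disjoint from $\ell_1$ in $q^4$ ways, then $\ell_3$ missing both in $q(q+1)(q-1)^2$ ways. On the other hand each DRQ contains exactly $2(q+1)q(q-1)$ ordered triads, since the three lines must lie in a single ruling and each of the two rulings contributes $(q+1)q(q-1)$ ordered triples of distinct lines. By the uniqueness just proved, every ordered triad lies on exactly one DRQ, so the number of DRQs is
\[
\frac{|\Lines|\cdot q^4\cdot q(q+1)(q-1)^2}{2(q+1)q(q-1)}=\frac{1}{2}(q^2+1)(q^2+q+1)q^4(q-1),
\]
as claimed. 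The main obstacle is the regulus phenomenon hidden in the extension step: one must be certain that the $q+1$ transversals, together with the lines meeting all of them, close up into the exact $(q+1)+(q+1)$ incidence pattern of a DRQ with no stray intersections. I expect the honest work to lie here --- discharged by the coordinate normal form above (reducing it to inspection of $x_0x_3=x_1x_2$) or by the classical fact that three pairwise skew lines lie in a unique regulus whose opposite regulus is their set of common transversals; the counting is then routine.
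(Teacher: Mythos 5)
Your proof is correct, and its counting half is exactly the paper's argument: double-count ordered triads using the previous lemma (giving $|\Lines|\cdot q^4\cdot q(q+1)(q-1)^2$ ordered triads) against the $2(q+1)q(q-1)$ ordered triads inside a single DRQ, then divide. Where you genuinely diverge is in the geometric half. The paper proceeds purely synthetically: it builds the $q+1$ transversals $m_0,\dots,m_q$ through the points of $L_1$ (just as you do, though via the plane $\langle p, L_2\rangle$ meeting $L_3$ rather than intersecting two planes), then simply ``repeats the construction'' on three of the $m_i$ to get the opposite family $n_0,\dots,n_q$, and asserts that this closes up into a DRQ extending the original triad uniquely. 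The closure issue you correctly identify as the crux --- that every $n_j$ meets every $m_i$, not merely the three used to construct it, and that no stray intersections occur --- is left implicit in the paper. You discharge it differently: by proving $\mathrm{PGL}_4(\fq)$ acts transitively on ordered skew triples (via the graph-of-an-isomorphism description of lines skew to a skew pair) and then reading off the full $(q+1)+(q+1)$ incidence pattern from the Segre quadric $x_0x_3=x_1x_2$ in normal form, with uniqueness forced because the second ruling must be the set of transversals and the first ruling is then the transversals of the second. Your route costs more setup (the transitivity argument) but buys a complete, inspectable proof of the regulus closure that the paper takes for granted; the paper's iterated synthetic construction is shorter and stays coordinate-free, but as written it proves uniqueness more convincingly than existence of the full incidence pattern. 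Your aside about nine independent conditions on the ten-dimensional space of quadratic forms is dispensable (and ``visibly independent'' would itself need checking); the forced-ruling argument you give alongside it is the one that does the work.
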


\begin{proof}
Consider any three disjoint lines $L_i, i=1,2,3$.  Fixing a point $p\in L_1$, the plane spanned by $p$ and
$L_2$ meets $L_3$ once, at $q$ say.  The line $pq$ meets each $L_i, i=1,2,3$ once, and every line that does so arises
this way.  The set of these lines bijects with the points on $L_1$, say, so we have $q+1$ lines, say $m_0,\ldots,m_q$.  No
two of the $m_i$'s meet or they would sit in a common plane which would force at least two of the $L_i$ to do the same,
thereby forcing them to meet.  Choose any three of the $m_i$ and repeat the construction to produce $q+1$ lines
$n_0,\ldots,n_q$ (extending our original choice of $L_i, i=1,2,3$). Three disjoint lines thereby extend to a unique
DRQ.

Given a DRQ, we have a choice of two disjoint families, and we can choose a sequence of three distinct lines from one
of those families in $(q+1)q(q-1)$ ways, for a total of $2(q+1)q(q-1)$ options.

As noted in the lemma above and its proof, the number of lines in $\fp3$ is$|\Lines|=(q^2+1)(q^2+q+1)$. Given a line $\ell$, $q^4$ lines will sit disjoint from it.  
Two disjoint lines may be extended to a triad in $q(q+1)(q-1)^2$ ways.  
We arrive at the number of DRQs by dividing
ways to pick an arbitrary sequence of three disjoint lines by ways of selecting them from a given DRQ:
$$\frac{(q^2+1)(q^2+q+1)\cdot q^4\cdot q(q+1)(q-1)^2}{2(q+1)q(q-1)}=\frac{1}{2}(q^2+1)(q^2+q+1)q^4(q-1)\ .
$$
\end{proof}
Now we can describe the entries of $B$.

\bigskip\noindent
\begin{lemma} The Cavalieri matrix $B=AA^t$ has the following entries
\begin{equation*}
b_{\ell_1 , \ell_2} = 
\begin{cases}
q^4(q^2-1) & \textrm{\qquad if \qquad $\ell_1,\ell_2$ are equal} \\
-q^3(q-1)  & \textrm{\qquad if \qquad $\ell_1,\ell_2$ meet at a pt} \\
q(q^2-1)   & \textrm{\qquad if \qquad $\ell_1,\ell_2$ are disjoint} 
\end{cases}
,
\end{equation*}
\end{lemma}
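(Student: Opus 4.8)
The plan is to read each entry $b_{\ell_1,\ell_2}$ straight off the definition $B=AA^t$, which gives
$$
b_{\ell_1,\ell_2}=\sum_{Q\in\Quadrics} A_{\ell_1,Q}\,A_{\ell_2,Q},
$$
where $A_{\ell,Q}\in\{-1,0,+1\}$ is the coefficient with which $\ell$ appears in the chosen DRQ $Q$ (that is, $+1$ if $\ell$ lies in the first ruling, $-1$ if in the second, and $0$ if $\ell\notin Q$). Thus each entry is a \emph{signed count} of the DRQs containing the relevant lines, and the whole computation splits into two tasks: determining the sign contributed by each DRQ, and counting how many DRQs contain the given line or pair of lines.

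First I would pin down the sign using the incidence structure of a DRQ: two of its $2(q+1)$ lines are disjoint exactly when they lie in the same ruling, and meet exactly when they lie in opposite rulings. Consequently $A_{\ell,Q}^2$ equals $1$ whenever $\ell\in Q$ and $0$ otherwise; for an intersecting pair the two lines necessarily occupy opposite rulings, so $A_{\ell_1,Q}A_{\ell_2,Q}=(+1)(-1)=-1$; and for a disjoint pair they occupy the same ruling, giving $A_{\ell_1,Q}A_{\ell_2,Q}=(\pm1)^2=+1$. In each case the product is independent of which ruling was assigned the plus sign, which is precisely why $B$ does not depend on the sign choices made in defining $\Quadrics$.

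It then remains to produce three incidence counts: (i) the number of DRQs through a fixed line $\ell$; (ii) the number containing a fixed intersecting pair; and (iii) the number containing a fixed disjoint pair. Each I would obtain by double counting against the total $|\Quadrics|=\tfrac12(q^2+1)(q^2+q+1)q^4(q-1)$ supplied by the previous lemma, using that every DRQ carries $2(q+1)$ lines, $(q+1)^2$ unordered cross-ruling (intersecting) pairs, and $2\binom{q+1}{2}=q(q+1)$ same-ruling (disjoint) pairs. Comparing these tallies with the global counts of lines, of intersecting pairs (each line meets $(q+1)(q^2+q)$ others), and of skew pairs (each line misses $q^4$ others) yields the three numbers $q^4(q^2-1)$, $q^3(q-1)$, and $q(q^2-1)$ respectively; reinstating the signs from the previous paragraph gives the three cases of the lemma.

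The main obstacle I expect is ensuring these counts are uniform: the double-counting identities deliver only the \emph{average} number of incident DRQs, so to conclude that this average is attained at every individual line or pair I would invoke the transitivity of the collineation group $\mathrm{PGL}_4(\fq)$ on lines, on intersecting pairs, and on skew pairs. For the diagonal the value can alternatively be confirmed by a direct count, extending $\ell$ to a triad within its ruling by the extension lemma and dividing by the $\binom{q}{2}$ triads each ruling supplies. Keeping the ordered/unordered and same-ruling/cross-ruling distinctions consistent throughout is where an arithmetic slip is most likely, so I would re-derive each of the three numbers carefully and cross-check the diagonal by both routes.
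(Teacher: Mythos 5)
Your proposal follows essentially the same route as the paper's proof: interpret each entry of $B=AA^t$ as a signed count of DRQs containing the given line or pair, fix the signs by observing that intersecting pairs lie in opposite rulings while disjoint pairs lie in the same ruling, and obtain the three counts by double counting against the total number of DRQs (your unordered-pair ratios agree with the paper's ordered-pair ratios and yield the same values). The only difference is that you explicitly justify uniformity of the counts via transitivity of $\mathrm{PGL}_4(\mathbb{F}_q)$ on lines, intersecting pairs, and skew pairs --- a point the paper's averaging argument leaves implicit --- which is a sound refinement rather than a different approach.
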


\begin{proof} The entry $b_{\ell_1,\ell_1}$ counts the number of DRQs that contain $\ell_1$; $-b_{\ell_1,\ell_2}$ counts the
number of DRQs that contain both $\ell_1$ and $\ell_2$ if they meet at a point; and $b_{\ell_1,\ell_2}$ counts the
number of DRQs that contain both $\ell_1$ and $\ell_2$ if they don't meet.  We must make these counts.

Multiplying the total number of DRQs by lines in a DRQ and dividing by the total number of lines gives (1).
Multiplying the total number of DRQs by the number of ordered pairs of crossing lines in a DRQ ($2(q+1)^2)$ and
dividing by the total number of all ordered pairs of crossing lines ($(q^2+1)(q^2+q+1)(q+1)(q^2+q)$) gives (2). (The
negative sign accounts for lines from opposite disjoint families.) Multiplying the total number of DRQs by the number
of ordered pairs of skew lines in a DRQ ($2(q+1)q)$ and dividing by the total number of all ordered pairs of skew
lines ($(q^2+1)(q^2+q+1)q^4$) gives (2).
\end{proof}

\bigskip\noindent{\bf Notation}  Two lines may sit in one of three relations:
they may coincide ($=$), meet at a point ($\times$) or not meet ($||$).
Given two lines $L_1$ and $L_2$ that sit in relation $r_3$
we shall write $M_{r_3,(r_1,r_2)}$ for the number of lines $L_3$ that sit in relation $r_1$ with respect to $L_1$
and relation $r_2$ with respect to $L_2$.  Note that if either $r_1$ or $r_2$ stand for $=$, we get a count of 1
and thus suppress the notation in the sequel.

In the same spirit, let us write the matrix coefficients of $B$ as: 
$$
b_=:=q^4(q^2-1) \quad ; \qquad  b_\times=-q^3(q-1) \quad ; \qquad b_{||}:=q(q^2-1).
$$

\bigskip
Now we come to the fact that lies at the technical heart of our work.

\bigskip
\begin{lemma} 
$B$ is an orthogonal projection scaled by $v \equiv q^2(q-1)(q+1)(q^2+q+1)$. The rank of  $B$ is $q^4+q^2$.
\end{lemma}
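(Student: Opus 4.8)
The plan is to exploit the two structural features of $B=AA^t$: it is symmetric, and (we claim) it squares to a scalar multiple of itself. A real symmetric matrix $P$ with $P^2=P$ is precisely an orthogonal projection, so it suffices to establish the single matrix identity $B^2=vB$; then $B/v$ is an orthogonal projection, and since the rank of a projection equals its trace, we obtain $\operatorname{rank}(B)=\operatorname{tr}(B)/v$. The rank count then follows from a one-line trace computation that is logically independent of the harder identity: every diagonal entry equals $b_= = q^4(q^2-1)$ and there are $|\Lines|=(q^2+1)(q^2+q+1)$ of them, so
\[
\operatorname{tr}(B) = (q^2+1)(q^2+q+1)\,q^4(q^2-1),
\]
and dividing by $v = q^2(q^2-1)(q^2+q+1)$ collapses to $(q^2+1)q^2 = q^4+q^2$, as asserted.

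The substance is the identity $B^2=vB$, which I would verify entrywise, tailored to the $M$-notation just introduced. Fix lines $\ell_1,\ell_2$ in relation $r_3\in\{=,\times,||\}$; then
\[
(B^2)_{\ell_1,\ell_2} = \sum_{\ell_3} b_{\ell_1,\ell_3}\,b_{\ell_3,\ell_2} = \sum_{(r_1,r_2)} M_{r_3,(r_1,r_2)}\, b_{r_1}\, b_{r_2},
\]
where the outer sum runs over the (at most nine) joint relation types and the coefficients are exactly the intersection numbers $M_{r_3,(r_1,r_2)}$. For each choice of $r_3$ the goal is to check that this weighted sum equals $v\,b_{r_3}$. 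The diagonal case $r_3={=}$ uses only the valencies already recorded (one coincident line, $q(q+1)^2$ meeting, $q^4$ skew) and reduces to $b_=^2 + q(q+1)^2\,b_\times^2 + q^4\,b_{||}^2 = (q^2-1)^2 q^6(q^2+q+1) = v\,b_=$. The two off-diagonal cases require the remaining intersection numbers, and computing these by projective incidence counting in $\fp3$ — for instance, counting the lines simultaneously skew to $\ell_1$ and meeting $\ell_2$ when $\ell_1,\ell_2$ themselves either meet or are skew — is the main obstacle. It is pure bookkeeping, but it is where all the real work lives.

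A cleaner and more transparent route records that the three relations make the lines of $\fp3$ a two-class symmetric association scheme: the ``meet at a point'' graph is the Grassmann graph $J_q(4,2)$, whose complement is the ``skew'' graph. Hence the identity $I$, the meeting-adjacency matrix $A_\times$, and the skew-adjacency matrix $A_{||}=J-I-A_\times$ (with $J$ the all-ones matrix) span a commutative algebra, are simultaneously diagonalized, and $B=b_= I + b_\times A_\times + b_{||} A_{||}$ is diagonal in the same eigenbasis. The eigenvalues of $J_q(4,2)$ are $q(q+1)^2$ on the constants (multiplicity $1$), $q^2-1$ (multiplicity $q(q^2+q+1)$), and $-(q+1)$ (multiplicity $q^4+q^2$), and on each non-constant eigenspace $A_{||}$ acts as $-1-A_\times$. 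Substituting gives eigenvalues of $B$ equal to $0$ on the constants, $0$ on the $q^2-1$ eigenspace, and
\[
b_= - b_\times(q+1) + b_{||}\,q = (q^2-1)\,q^2(q^2+q+1) = v
\]
on the $-(q+1)$ eigenspace.

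Thus $B$ has only the eigenvalues $0$ and $v$, so $B/v$ is the orthogonal projection onto the $-(q+1)$ eigenspace of $A_\times$, and its rank is that eigenspace's multiplicity, namely $q^4+q^2$ (one checks $1 + q(q^2+q+1) + (q^4+q^2) = |\Lines|$, as it must). This spectral argument both proves the lemma and serves as an independent check on the intersection-number bookkeeping; the obstacle it trades into is citing or rederiving the standard spectrum and multiplicities of the Grassmann graph. I would present the spectral computation as the main proof and keep the entrywise identity as corroboration.
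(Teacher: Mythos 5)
Your proposal is correct, and your main argument is genuinely different from the paper's. The paper does exactly what you sketch (but defer) in your first route: it verifies $B^2 = vB$ entry by entry, computing each intersection number $M_{r_3,(r_1,r_2)}$ by projective incidence counting (e.g.\ $M_{\times,(\times,||)}=q^3$, $M_{||,(\times,\times)}=(q+1)^2$, etc.) and checking $c_= = vb_=$, $c_\times = vb_\times$, $c_{||}=vb_{||}$; the rank then comes from the trace, exactly as in your opening paragraph. What you present as the main proof instead recognizes $B = b_=I + b_\times A_\times + b_{||}A_{||}$ as an element of the Bose--Mesner algebra of the two-class association scheme on lines (the Grassmann graph $J_q(4,2)$ and its complement), and diagonalizes it using the known spectrum of $J_q(4,2)$. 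Your numerics all check out: the eigenvalue of $B$ on the constants is $b_= + q(q+1)^2\,b_\times + q^4\,b_{||} = 0$, on the $(q^2-1)$-eigenspace it is $b_= + (q^2-1)b_\times - q^2 b_{||} = 0$, and on the $-(q+1)$-eigenspace it is $b_= - (q+1)b_\times + q\,b_{||} = q^2(q^2-1)(q^2+q+1) = v$, with multiplicity $q^4+q^2$. The trade-off is real: the paper's route is self-contained and elementary but buries the structure in bookkeeping (and its published computation has typos, e.g.\ a stray $p$ in $M_{||,(||,||)}$, which your independent check would have caught); your route is shorter and explains \emph{why} $B$ is a scaled projection --- it lies in a commutative algebra with exactly three common eigenspaces, two of which it kills --- and it delivers the rank as an eigenspace multiplicity rather than via a trace, but it imports the spectrum and multiplicities of the Grassmann graph as an external citation (or an additional derivation). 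Either version is a complete and valid proof of the lemma; note only that if you keep the entrywise identity merely ``as corroboration,'' you must actually cite the Grassmann spectrum rather than leave both halves partially done.
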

\begin{proof} 
Orthogonality will follow once we establish idempotence for the symmetric and thus self-adjoint matrix $(1/v)B$.
Write $C:=B^2$.
We need to calculate matrix coefficients  $c_=$, $c_\times$ and $c_{||}$ for $C$.
Clearly
$$
c_= =  (b_=)^2+M_{=,(\times,\times)}(b_\times)^2+M_{=,(||,||)}(b_{||})^2
$$
where  $M_{=,(\times,\times)}=(q+1)(q^2+q)$ , and $M_{=,(||,||)}=q^4$ .
Calculating
$$
c_= = q^6(q^2+q+1)(q+1)^2(q-1)^2\ 
$$
and indeed  $c_=  = vb_=.$

Similarly
$$c_\times=  2(b_=)(b_\times)
           +2M_{\times,(\times,||)}(b_\times)(b_{||})
            +M_{\times,(\times,\times)}(b_{\times})^2
            +M_{\times,(||,||)}(b_{||})^2
            $$
Here
$M_{\times,(\times,||)}=q(q^2+q)-q^2=q^3$, $M_{\times,(\times,\times)}=(q^2+q-1)+q^2$ and
$$
M_{\times,(||,||)}=(q^2+q+1)(q^2+1)-(1+1)-2M_{\times,(\times,||)}-M_{\times,(\times,\times)}=q^4-q^3.
$$
Calculating, one indeed finds $c_\times = -q^{10}+q^8+q^7-q^5 = vB_\times$.

With similar notation,
$$
   c_{||}=  2(b_=)(b_{||})
           +2M_{||,(\times,||)}(b_\times)(b_{||})
            +M_{||,(\times,\times)}(b_{\times})^2
            +M_{||,(||,||)}(b_{||})^2,            
$$
where
$$
M_{||,(\times,||)}=(q+1)(q^2+q)-(q+1)^2=(q-1)(q+1)^2,
$$
$M_{||,(\times,\times)}=(q+1)^2$, and         
$$
M_{||,(||,||)}=
          (q^2+p+1)*(q^2+1)-(1+1)-2M_{||,(\times,||)}-M_{||,(\times,\times)}=q^4-q^3-q^2+q.
$$
Calculating again, 
$$
c_{||}= q^9+q^8-q^7-2q^6-q^5+q^4+q^3= vB_{||},
$$ 
as desired.

Finally, the rank of $B$ equals the trace of $(1/v)B$, which equals

$$
b_=\cdot|\Lines|/v=
\frac{q^4(q^2-1)  (q^2+1)(q^2+q+1)}{(q^2(q-1)(q+1)(q^2+q+1)}=q^4+q^2,
$$ 
as desired.
\end{proof}

We have thus shown that the dimension of Cavalieri conditions attached to DRQs is precisely the quantity by which the number of lines exceeds the number of points in $\fp3$. This completes the proof of our characterization of admissible line complexes in $\fp3$.

\begin{remark}
The quantity $v$, non-zero eigenvalue of $B$, equal $q^2/2$ times the number DRQs that
contain a given point.
\end{remark}

\begin{remark}
Playing the same game, now not with $A$, but symmetrizing the incidence matrix
relating lines and points, (which will produce entries of $q+1, 1, 0$ corresponding to
relations $=$, $\times$, $||$), one gets a matrix (with rows and columns indexed by lines)
close to a scaled projection except for one eigenvector with a distinct eigenvalue.
The constant function on lines will have eigenvalue $(q+1)(q^2+q+1)$.  Orthogonal to
this one we'll have a scaled projection, eigenvalue $(q+1)q$ with rank $|\Lines|-1$.
\end{remark}

\end{document}